\numberwithin{equation}{section}
\newtheorem{theorem}[equation]{Theorem}
\newtheorem{proposition}[equation]{Proposition}
\theoremstyle{definition}
\newtheorem{rmk}[equation]{Remark}
\newenvironment{remark}[1][]{\begin{rmk}[#1] \pushQED{\qed}}{\popQED \end{rmk}}
\newtheorem{eg}[equation]{Example}
\newtheorem{defn}[equation]{Definition}
\newcommand{\bF}{\mathbf{F}}
\newcommand{\cF}{\mathcal{F}}
\newcommand{\bG}{\mathbf{G}}
\newcommand{\rH}{\mathrm{H}}
\newcommand{\bK}{\mathbf{K}}
\newcommand{\cO}{\mathcal{O}}
\newcommand{\bQ}{\mathbf{Q}}
\newcommand{\cQ}{\mathcal{Q}}
\newcommand{\cR}{\mathcal{R}}
\newcommand{\bZ}{\mathbf{Z}}
\newcommand{\bv}{\mathbf{v}}
\renewcommand{\phi}{\varphi}
\renewcommand{\emptyset}{\varnothing}
\newcommand{\arxiv}[1]{\href{http://arxiv.org/abs/#1}{{\tt arXiv:#1}}}
\newcommand{\bysame}{---------}
\def\Ddots{\mathinner{\mkern1mu\raise\p@
\vbox{\kern7\p@\hbox{.}}\mkern2mu
\raise4\p@\hbox{.}\mkern2mu\raise7\p@\hbox{.}\mkern1mu}}
\renewcommand{\hom}{\operatorname{Hom}}
\DeclareMathOperator{\rank}{rank}
\DeclareMathOperator{\Pf}{Pf}
\DeclareMathOperator{\Sym}{Sym}
\DeclareMathOperator{\depth}{depth}
\DeclareMathOperator{\Spec}{Spec}
\newcommand{\Gr}{\mathbf{Gr}}
\title{Koszul homology of codimension 3 Gorenstein ideals}
\author{Steven V Sam \and Jerzy Weyman}
\date{March 14, 2012}
\begin{document}

\maketitle

\begin{abstract}
In this note, we calculate the Koszul homology of the codimension 3 Gorenstein ideals. We find filtrations for the Koszul homology in terms of modules with pure free resolutions and completely describe these resolutions. We also consider the Huneke--Ulrich deviation 2 ideals.
\end{abstract}

\section*{Introduction.}

For the codimension 3 Pfaffian ideal of $2n\times 2n$ Pfaffians of a $(2n+1)\times (2n+1)$ generic skew-symmetric matrix, we give an explicit description of the Koszul homology modules. By a result of Buchsbaum--Eisenbud \cite{be}, the general case of codimension 3 Gorenstein ideals reduces to this case. They are filtered by equivariant modules $M_i$ with self-dual pure free resolutions of length 3 supported in the ideal of Pfaffians. The free resolutions of the modules $M_i$ give natural generalizations of the Buchsbaum--Eisenbud complexes for codimension 3 Gorenstein ideals and are interesting in their own right. It was known that the Koszul homology modules of codimension 3 Pfaffian ideals are Cohen--Macaulay \cite[Example 2.2]{huneke}, but no explicit description was given. The only other example we could find in the literature of explicit calculations of Koszul homology is the paper of Avramov--Herzog \cite{ah} which handles the case of codimension 2 perfect ideals. The Koszul homology modules of codimension 3 Pfaffian ideals also gives examples of modules with pure filtrations that do not follow from the results in \cite{numerics}. Finally we calculate the Koszul homology modules for the Huneke--Ulrich deviation 2 ideals which we were studied by Kustin \cite{kustin}.

\subsection*{Acknowledgements.}

Steven Sam was supported by an NDSEG fellowship. Jerzy Weyman was partially supported by NSF grant DMS-0901185. The computer algebra system Macaulay2 \cite{M2} was very helpful for finding the results presented in this paper.


\section{Koszul homology.}

Throughout $R$ is a Cohen--Macaulay (graded) local ring. After this section, we will be working over polynomial rings with $\bZ$-coefficients, which we pretend is a graded local ring by saying that its maximal ideal is the one generated by the variables. Let $I \subset R$ be a (graded) ideal of grade $g$, and let $\mu(I)$ denote the smallest size of a generating set of $I$. The Koszul homology of $I$ depends on a set of generators, but any two choices of {\it minimal} generating sets yield isomorphic Koszul homology. In the case of a minimal generating set, we denote the Koszul homology by $\rH_\bullet(I; R)$. We will only be interested in Koszul homology for minimal generating sets of $I$. We say that $I$ is {\bf strongly Cohen--Macaulay} if the Koszul homology of $I$ is
Cohen--Macaulay.

If $R$ is Gorenstein and $R/I$ is Cohen--Macaulay, then the top nonvanishing Koszul homology $\rH_{\mu(I)-g}(I; R)$ is the canonical module $\omega_{R/I}$ of $R/I$ (see \cite[Remark 1.2]{hunekestrong}). Furthermore, the exterior multiplication on the Koszul complex induces maps
\begin{align} \label{eqn:koszuldual}
\rH_i(I;R) \to \hom_R(\rH_{\mu(I)-g-i}(I;R), \rH_{\mu(I)-g}(I;R)),
\end{align}
and these maps are isomorphisms in the case that $I$ is strongly
Cohen--Macaulay. This is also true if we only assume that the Koszul
homology modules are reflexive \cite[Proposition 2.7]{hunekestrong}.


\section{Codimension 3 Pfaffian ideals.}

In this section we work over the integers $\bZ$ and set $A=\Sym (\bigwedge^2 E)$
where $E$ is a free $\bZ$-module of rank $2n+1$. We consider the ideal
$I= \Pf_{2n}(\phi)$ of $2n\times 2n$ Pfaffians of the generic
skew-symmetric matrix
\[
\phi= (\phi_{i,j})_{1\le i,j\le n}
\]
where $\phi_{i,j}$ are the variables satisfying $\phi_{i,j} = - \phi_{j,i}$. The free resolution for this ideal and its main properties can be found in \cite{be} (the quotient $A/I$ is also the module $M_0$ defined in the next section). Thus if $\lbrace e_1,\ldots, e_{2n+1}\rbrace$ is a basis in $E$,
we can think of $\phi_{i,j}=e_i\wedge e_j\in\bigwedge^2 E$.  Denote
the $2n\times 2n$ Pfaffians of $\phi$ by
\[
Y_i = (-1)^{i+1} \Pf \phi(i)
\]
where $\phi(i)$ is the skew-symmetric matrix we get from $\phi$ by omitting
the $i$-th row and $i$-th column. 

Consider the Koszul complex $\bK_\bullet = \bK(Y_1,\ldots,
Y_{2n+1}; A)$. In this case,
\begin{align*} 
\bK_i = \bigwedge^i (\bigwedge^{2n} E) \otimes A(-in) =
\bigwedge^i E^* \otimes (\det E)^i \otimes A(-in) = \bigwedge^{2n+1-i} E
\otimes (\det E)^{i-1} \otimes A(-in).
\end{align*}


\subsection{Modules $M_i$.}

Before we start we describe a family of $A$-modules supported in the
ideal $I$. For $i=0,\dots,n-1$, we get equivariant inclusions
\begin{align*}
  d_1 &\colon \bigwedge^{2n-i} E \subset \bigwedge^i E \otimes
  \bigwedge^{2n-2i} E \subset \bigwedge^i E \otimes
  \Sym^{n-i}(\bigwedge^2 E)\\
  d_2 &\colon \det E \otimes \bigwedge^{i+1} E \subset
  \bigwedge^{2n-i} E \otimes \bigwedge^{2i+2} E \subset
  \bigwedge^{2n-i} E \otimes \Sym^{i+1}(\bigwedge^2 E) \\
  d_3 &\colon \det E \otimes \bigwedge^{2n+1-i} E \subset \det E
  \otimes \bigwedge^{i+1} E \otimes \bigwedge^{2n-2i} E \subset \det E
  \otimes \bigwedge^{i+1} E \otimes \Sym^{n-i}(\bigwedge^2 E),
\end{align*}
where in each case the first inclusion can be defined in terms of
comultiplication, and the second is given by Pfaffians. We make these
maps more explicit. Let $e_1, \dots, e_{2n+1}$ be an ordered basis for
$E$ compatible with $\phi$. For an ordered sequence $I=(i_1 ,\ldots
,i_n)$ consisting of elements from $[1,2n]$ we denote by $e_I$ the
decomposable tensor $e_{i_1} \wedge \cdots \wedge e_{i_n}$. The embedding
$\bigwedge^{2d} E \subseteq \Sym^d(\bigwedge^2 E)$ sends the tensor
$e_I$ ($\# I=2d$) to the Pfaffian of the $2d\times 2d$ skew-symmetric
submatrix of $\phi$ corresponding to the rows and columns indexed by
$I$. We will denote this Pfaffian by $\Pf(I)$. With these conventions,
the maps $d_1$, $d_2$, $d_3$ are given by the formulas
\begin{align*}
  d_1 (e_I ) &= \sum_{I'\subset I} {\rm sgn}(I', I'')e_{I'}\otimes
  \Pf(I''),\\
  d_2 (e_{[1,2n+1]}\otimes e_J ) &= \sum_{I'\subset [1,2n+1]} {\rm
    sgn}(I', I'') {\rm sgn}(I'', J) e_{I'}\otimes \Pf(I''\cup
  J), \\
  d_3(e_I ) &=\sum_{I'\subset I} {\rm sgn}(I', I'')e_{I'}\otimes
  \Pf(I''),
\end{align*}
where $I''$ is the complement of $I'$ in $I$, all subsets are listed
in increasing order, and ${\rm sgn}(I' ,I'')$ is the sign of the
permutation that reorders $(I', I'')$ in its natural order. The symbol
$\Pf(I''\cup J)$ is by convention 0 if $I''\cap J\ne \emptyset$.

\begin{proposition} \label{prop:pfaffiancomplex}
For $i=0,\dots,n-1$, we define the complex $C^i$
\[
0\rightarrow \begin{array}{c} (\det E)\otimes\bigwedge^{2n+1-i}E \\
  \otimes A(-2n+i-1) \end{array} \xrightarrow{d_3} \begin{array}{c}
  (\det E)\otimes\bigwedge^{i+1}E\\ \otimes A(-n-1) \end{array}
\xrightarrow{d_2} \begin{array}{c} \bigwedge^{2n-i}E \\ \otimes
  A(-n+i) \end{array} \xrightarrow{d_1} \begin{array}{c} \bigwedge^i E
  \\ \otimes A\end{array},
\]
using the inclusions defined above. This complex is acyclic, and the
cokernel $M_i$ is supported in the variety defined by the Pfaffians of
size $2n$.
\end{proposition}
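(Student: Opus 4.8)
The plan is to invoke the Buchsbaum--Eisenbud acyclicity criterion. Since $A=\Sym(\bigwedge^2 E)$ is a polynomial ring over $\bZ$, hence a Cohen--Macaulay domain, it suffices to check three things: (1) that $C^i$ really is a complex, i.e.\ $d_1 d_2=0$ and $d_2 d_3=0$; (2) the rank conditions $\rank d_k + \rank d_{k+1} = \rank C^i_k$ for $k=1,2,3$ (with $d_4=0$); and (3) $\grade I(d_k)\ge k$ for $k=1,2,3$, where $I(d_k)$ denotes the ideal of minors of $d_k$ of size $\rank d_k$. Granting these, $C^i$ is acyclic, hence a free resolution of $M_i=\coker d_1$, and $\supp M_i = V(I(d_1))$.

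For (1): substituting the explicit formulas for $d_1,d_2,d_3$ and reindexing the sums over subsets, every entry of the composite $d_1 d_2$ (resp.\ $d_2 d_3$) becomes, once the signs are collected, an alternating sum of products of two Pfaffians of (nearly complementary) submatrices of $\phi$; such sums vanish by the standard Laplace-type expansion identities for Pfaffians --- precisely the identities underlying the classical Buchsbaum--Eisenbud Pfaffian complex of \cite{be}, which is recovered here as $C^0$ (since $M_0=A/I$). This step is the most computation-heavy, but it is purely combinatorial; alternatively it follows from the geometric realization indicated below.

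For (2) and (3): localize away from the Pfaffian locus. Set $U=\Spec A\setminus V(I)$. A point $x\in U$ corresponds to an alternating form $\psi$ on $E\otimes k(x)$ with some $2n\times 2n$ Pfaffian nonzero; since $\psi$ is alternating on a space of odd dimension $2n+1$ this forces $\rank\psi=2n$, so $\ker\psi$ is a line $L$ and $E\otimes k(x)=L\oplus W$ with $(W,\psi|_W)$ symplectic of rank $2n$. Choosing a basis adapted to this splitting, one checks directly that the complex of fibres of $\widetilde{C^i}$ at $x$ is exact: modulo the line $L$, the three differentials reduce to exterior powers of the comultiplication and of multiplication/contraction by the symplectic form, and exactness is then a Lefschetz-type statement for $\bigwedge^\bullet W$. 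Fibrewise exactness at every point of the reduced scheme $U$ makes the ranks of the $d_k$ locally constant there, hence forces all homology sheaves of $\widetilde{C^i}|_U$ to vanish. Consequently $\rank d_k$ equals the generic rank --- namely $\binom{2n+1}{i}$, $\binom{2n+1}{i+1}-\binom{2n+1}{i}$, $\binom{2n+1}{i}$ for $k=3,2,1$ --- which is exactly (2); and $V(I(d_k))\subseteq \Spec A\setminus U = V(I)$, so $\grade I(d_k)\ge \grade I = 3 \ge k$, which is (3). In particular $\supp M_i = V(I(d_1))\subseteq V(I)$, the variety of $2n\times 2n$ Pfaffians.

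The step I expect to be the real obstacle is the fibrewise exactness over $U$: one has to arrange the three differentials in a $\psi$-adapted basis so that the argument reduces transparently to the known exactness properties of wedging with, and contracting against, a symplectic form on the $2n$-dimensional space $W$, while carefully tracking the extra kernel direction $L$ and the $\det E$ twists. As an alternative to the whole argument, one can obtain $C^i$ \emph{together with} its acyclicity from the geometric technique, by realizing $M_i$ as a direct image along a Kempf collapsing $Z\to V(I)$ --- with $Z$ a vector bundle over the Grassmannian $\Gr(3,E)$ parametrizing $3$-planes contained in $\ker\psi$ --- and reading off the terms of $C^i$ from $H^\bullet(\Gr(3,E),-)$ via Bott's theorem; but the direct verification above is more elementary and keeps the explicit differentials in view.
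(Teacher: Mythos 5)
Your overall strategy (the rank--grade form of the Buchsbaum--Eisenbud criterion, verified by showing exactness away from the Pfaffian locus) is a reasonable variant of the paper's argument, and your step (1) and the rank count in (2), being a computation at the generic point of $\Spec A$ (which has characteristic zero), are fine. The genuine gap is the step you yourself flag: fibrewise exactness of $C^i$ at every point of $U=\Spec A\setminus V(I)$. Over $\bZ$ this claim is not merely unproved, it is false. Take $n=3$, $i=2$, and a point of $U$ with residue field $\bF_2$ at which $\phi$ specializes to the standard rank-$6$ alternating form $\psi$ with $\psi_{12}=\psi_{34}=\psi_{56}=1$ and all other entries $0$ (here $Y_7=\pm1\neq0$, so the point lies in $U$). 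The specialized $d_1\colon\bigwedge^4E\otimes k\to\bigwedge^2E\otimes k$ sends $e_I$ to $\pm e_{I\setminus P}$ if $I$ contains exactly one of the three pairs $P$, to $e_{12}+e_{34}$, $e_{12}+e_{56}$, $e_{34}+e_{56}$ if $I$ is a union of two pairs, and to $0$ otherwise; hence the functional taking the value $1$ on $e_{12},e_{34},e_{56}$ and $0$ on all other monomials annihilates the image in characteristic $2$, so $d_1\otimes k(x)$ is not surjective and the fibre complex cannot be exact (exactness of the four-term fibre complex forces $\rank(d_1\otimes k(x))=\binom{2n+1}{i}$). The culprit is exactly the input you want to invoke: the symplectic Lefschetz maps on $\bigwedge^\bullet W$ are isomorphisms over $\bQ$ but not over $\bZ$ --- for $\dim W=6$ the contraction $\iota_\psi\colon\bigwedge^4W\to\bigwedge^2W$ has determinant $\pm2$, and indeed $\psi\wedge\psi=0$ in characteristic $2$. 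Consequently the inclusion $V(I(d_k))\subseteq V(I)$ fails integrally (in the example above the whole divisor $V(2)$ lies in $V(I(d_1))$, and a dual kernel vector shows the same for $d_3$), so the grade bounds $\grade I(d_k)\ge k$ for $k=2,3$ cannot be obtained by your localize-off-$V(I)$ argument; this is where the proof breaks.

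By contrast, the paper checks the complex property after extending scalars to $\bQ$ using Pieri's rule, and proves acyclicity with the depth-sensitive form of the criterion: localize at primes of depth at most $2$, observe that some entry of $\phi$ becomes a unit, split off a hyperbolic plane by row and column operations, and induct on $n$ --- an argument that never requires pointwise exactness of fibres in positive characteristic. Your plan as written is essentially sound over $\bQ$ (or after inverting the small primes for which the relevant Lefschetz determinants are not units), and in that setting it is a genuinely different route from the paper's; but the paper works over $\bZ$, and there your central claim fails. I would also caution that the computation above is not just an obstruction to your method: it exhibits a point outside $V(I)$ in characteristic $2$ where the cokernel of $d_1$ has nonzero fibre, so the integral support (and, via the ``only if'' direction of the rank--grade criterion applied to $d_3$, even the integral acyclicity) assertion for $C^{n-1}$ with $n\ge3$ is delicate at the prime $2$; any write-up along your lines should therefore be stated over $\bQ$ or with the offending primes inverted, where the fibrewise symplectic Lefschetz statements you need are actually available.
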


\begin{proof} 
To check that the above is a complex, it is enough to extend scalars to $\bQ$. In this case, we can use representation theory (namely, Pieri's formula \cite[Corollary 2.3.5]{weyman} and the decomposition of $\Sym(\bigwedge^2)$ into Schur functors \cite[Proposition 2.3.8]{weyman}) to see that these maps define a complex. 

To prove acyclicity, we use the Buchsbaum--Eisenbud exactness criterion. The formulation of this result that we use, which is a consequence of \cite[Theorem 20.9]{eisenbud}, is: Given a finite free resolution $\bF_\bullet$ of length $n$, then $\bF_\bullet$ is acyclic if and only if the localization $(\bF_\bullet)_P$ is acyclic for all primes $P$ with $\depth A_P < n$. Localizing at a prime $P$ with depth at most $2$, some variable becomes a unit, so using row and column operations, we can reduce $\phi$ to the matrix
\[
{\hat \phi}=\begin{bmatrix}
0&1&0\\
-1&0&0\\
0&0&\phi'
\end{bmatrix}
\]
where $\phi'$ is a generic $(2n-1)\times (2n-1)$ skew-symmetric matrix. Let $C'^0, \ldots ,C'^{n-2}$ be the complexes in Proposition~\ref{prop:pfaffiancomplex} defined for the matrix $\phi'$. Then
\[
(C^i)_P \cong C'^i \oplus 2C'^{i-1} \oplus C'^{i-2},
\]
with the convention that $C'^{n-1}=0$ and $C'^j=0$ for $j<0$. By induction on the size of $\phi$, we see that each $C'$ is acyclic.
\end{proof}


\subsection{Results and proofs.}

\begin{theorem}
Set $\bK_\bullet = \bK(Y_1,\ldots,Y_{2n+1}; A)_\bullet$.
\begin{compactenum}[\rm (a)]
\item For $0\le j\le n-1$ we have a filtration $\cdots \subset \cF_1 \rH_j \subset \cF_0 \rH_j = \rH_j(\bK_\bullet)$ such that
\[
\cF_i \rH_j / \cF_{i+1} \rH_j \cong M_{j-2i} \otimes (\det E)^j
\]
\item For $0\le j\le n-2$ we have a filtration $0 = \cF_0 \rH_{2n-2-j} \subset \cF_1 \rH_{2n-2-j} \subset \cdots$ such that
\[
\cF_{i+1} \rH_{2n-2-j} / \cF_i \rH_{2n-2-j} \cong M_{j-2i} \otimes (\det E)^{2n-2-j}
\]
\end{compactenum}
\end{theorem}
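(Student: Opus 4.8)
The plan is to compute the homology of the Koszul complex $\bK_\bullet$ by exploiting the self-duality built into the problem, together with the acyclic complexes $C^i$ from Proposition~\ref{prop:pfaffiancomplex}. First I would set up the Koszul complex explicitly using the identification
\[
\bK_i = \bigwedge^{2n+1-i} E \otimes (\det E)^{i-1} \otimes A(-in)
\]
and observe that the differential is built from the $d_1$-type map: contracting with $(Y_1,\dots,Y_{2n+1})$ is, up to twist, the comultiplication-followed-by-Pfaffian map $\bigwedge^{2n+1-i}E \to \bigwedge^{2n+2-i}E \otimes \Sym^{?}(\bigwedge^2 E)$. The key structural input is that $\Sym(\bigwedge^2 E)$ decomposes, after inverting the order of the group, into a multiplicity-free sum of Schur functors, so Pieri's rule controls every map in sight. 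I would then identify the Koszul differential, on the nose, with a map assembled from the $d_1$, $d_2$, $d_3$ of the $C^i$; this is the computational heart and should be checked over $\bQ$ using representation theory (Pieri plus the plethysm of $\Sym(\bigwedge^2)$, exactly as in the proof of Proposition~\ref{prop:pfaffiancomplex}), then extended to $\bZ$ by the usual torsion-free argument.

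Granting that identification, I would build the filtration by slicing the Koszul complex according to the $\Sym$-degree (equivalently, the $A$-grading strand), producing a filtration whose associated graded pieces are precisely copies of the complexes $C^{j-2i}$ (tensored with the appropriate power of $\det E$). Concretely, the strand of $\bK_\bullet$ in a fixed internal degree is a finite complex of free $\bZ$-modules which, by Pieri, breaks into a direct sum of shifted copies of the four-term strings underlying the $C^i$; the Koszul differential respects a filtration on these strings in which the subquotients are the $d_1,d_2,d_3$ maps. Then part (a) follows by taking homology of each $C^{j-2i}$: by Proposition~\ref{prop:pfaffiancomplex} each $C^i$ is acyclic with cokernel $M_i$, so the only surviving homology in homological degree $j$ is $\bigoplus_i M_{j-2i}$, and the filtration $\cF_\bullet \rH_j$ is the one induced by the degree filtration.

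For part (b), the cleanest route is duality: the Koszul complex $\bK_\bullet$ on $2n+1$ elements is self-dual up to a shift and a twist by $\det E$, i.e.\ $\bK_\bullet^\vee \cong \bK_{2n+1-\bullet}$ (as $\bigwedge^\bullet$ of a free module is self-dual), and because $I$ is a codimension $3$ Gorenstein (in fact Pfaffian) ideal with $A/I$ Cohen--Macaulay, the pairing~\eqref{eqn:koszuldual} is an isomorphism; Huneke's result that these Koszul homology modules are Cohen--Macaulay (cited in the Introduction) guarantees strong Cohen--Macaulayness, so the duality is available. Applying $\hom_A(-,\omega_{A/I})$ to the filtration from (a) in degree $j$ turns the increasing filtration into a decreasing one on $\rH_{2n-2-j}$, and turns each subquotient $M_{j-2i}\otimes(\det E)^j$ into its dual; since the free resolution of $M_i$ in Proposition~\ref{prop:pfaffiancomplex} is visibly self-dual (the complex $C^i$ reads the same forwards and backwards up to twist by $\det E \otimes A(-2n-1)$, matching the canonical twist), each $M_i$ is its own canonical dual up to the explicit $\det E$ twist, which produces exactly the power $(\det E)^{2n-2-j}$ claimed.

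The main obstacle I anticipate is the first step: pinning down the precise matching between the Koszul differential $\bigwedge^{2n+1-i}E \otimes (\det E)^{i-1} \to \bigwedge^{2n-i}E \otimes (\det E)^{i-1}$ (with its $A(-n)$ twist) and the maps $d_1, d_2, d_3$, including all signs and the bookkeeping of which internal-degree strand contributes to which $C^k$. The representation-theoretic content is routine once set up, but identifying the filtration on each graded strand so that the subquotients are exactly the three Pfaffian maps — rather than some other complex with the same composition factors — requires a careful argument, probably by localizing at a depth-$\le 2$ prime (as in Proposition~\ref{prop:pfaffiancomplex}) and inducting on $n$ to rule out any extension ambiguity.
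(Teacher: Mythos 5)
Your reduction of part (b) to part (a) via the duality \eqref{eqn:koszuldual} and the self-duality of the complexes $C^i$ is exactly what the paper does, and is fine. The problem is the core of part (a): you propose to filter $\bK_\bullet$ (or its internal-degree strands) so that the associated graded pieces are twisted, shifted copies of the complexes $C^{j-2i}$, and then read off the homology from the acyclicity of the $C^i$. No such filtration can exist, for simple rank reasons: the minimal free resolutions of the Koszul homology modules are not subquotients of the Koszul complex. Already for $n=2$ (so $\operatorname{rank} E=5$) the terms of $\bK_\bullet$ have ranks $1,5,10,10,5,1$, while accounting only for $\rH_0$ and $\rH_1$ you would need $C^0$ in homological degrees $0$--$3$ and $C^1[-1]\otimes\det E$ in degrees $1$--$4$, which forces rank at least $5+10=15$ in homological degree $2$, exceeding the rank $10$ of $\bK_2$; adding split exact summands only makes this worse. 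The same count shows the strandwise ($\bZ$-module) version fails termwise as well, even though Euler characteristics agree. So the ``careful argument'' you flag at the end is not a matter of signs and bookkeeping: the decomposition you want to establish is false.

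What is actually needed, and what the paper does, is an iterated mapping cone argument that accounts for cancellation. Starting from $\bF(0)_\bullet=\bK_\bullet$, one takes the minimal free resolution $\bG(r)_\bullet$ of the cokernel of $\bF(r)_\bullet$ (built from the $C^i$, which is where Proposition~\ref{prop:pfaffiancomplex} enters), forms the mapping cone $\bF(r)_\bullet\to\bG(r)_\bullet$, and passes to its minimal subcomplex $\bF(r+1)_\bullet$; the cancellations between $\bF(r)$ and $\bG(r)$ in low homological degrees are computed explicitly, and Pieri's rule plus saturatedness of the equivariant maps is used to put the resulting presentation of $\rH_r$ in upper-triangular form, which is what produces the filtration by the $M_{r-2i}\otimes(\det E)^r$. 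If you want to salvage your outline, replace the ``slice $\bK_\bullet$ into copies of the $C^i$'' step by this inductive cone-plus-cancellation construction; your duality step for (b) can then stand as written.
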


\begin{proof}
We assume that $n>0$ since the case $n=0$ is trivial. We will construct a sequence of complexes $\bF(r)_\bullet$ for $r=0,\dots,n-1$ such that 
\begin{compactenum}
\item $\bF(0)_\bullet = \bK_\bullet$,
\item $\bF(r)_\bullet$ is concentrated in degrees $[r,2n+1]$,
\item The cokernel of $\bF(r)_\bullet$ has a filtration as specified by the theorem. Letting $\bG(r)_\bullet$ be its minimal free resolution, we have that  $\bF(r+1)_\bullet$ is the minimal subcomplex of the mapping cone $\bF(r)_\bullet \to \bG(r)_\bullet$,
\end{compactenum}
The existence of this sequence implies the first part of the theorem. For the second part, we appeal to \eqref{eqn:koszuldual} which says that $\rH_{2n-2-i}(\bK_\bullet)$ is the $A$-dual of $\rH_i(\bK_\bullet)$ (note that the $M_i$ are self-dual by the form of their free resolutions). We construct this sequence by induction on $r$. 

For $r=0$, there is nothing to check, so assume that $r>0$ and that $\bF(r-1)_\bullet$ has the listed properties. Then $\bF(r-1)$ is the minimal subcomplex of some extension of 
\[
\bK_\bullet \oplus \bigoplus_{i=0}^{r-2} ((\det E)^i \otimes \bigoplus_k C^{i-2k}[-i+1])
\]
which is concentrated in degrees $[r-1,2n+1]$. Since each complex $C$ has length 3, we see that $\bF(r-1)_i = \bK_i$ for all $i \ge r+1$. Recall that $r \le n-1$. Then we see that from the structure of the representations in the resolutions of the $M_i$ that after cancellations, we get (there are no cancellations in homological degree $r+1$)
\begin{align*}
\bF(r-1)_{r-1} &= \bigoplus_k \bigwedge^{r-1-2k} E \otimes (\det E)^{r-1} \otimes A\\
\bF(r-1)_r &= (\bigoplus_k \bigwedge^{2n-r+1+2k} E \otimes (\det E)^{r-1} \otimes A) \oplus \begin{cases} 0 & \text{if $r-1$ is even}\\ (\det E)^r \otimes A  & \text{if $r-1$ is odd} \end{cases}\\
\bF(r-1)_{r+1} &= \bK_{r+1} = \bigwedge^{2n-r} E \otimes (\det E)^r \otimes A
\end{align*}
(we ignore the grading since it is determined by the degree of the functor on $E$). By our induction hypothesis, up to a change of basis, we can write the presentation matrix for $\bF(r-1)$ in ``upper-triangular form'', i.e., the map from $\bigwedge^{2n-r+1+2k'} E$ to $\bigwedge^{r-1-2k} E$ is nonzero if and only if $k' \ge k$. Also, when $r-1$ is odd, the extra term $(\det E)^r \otimes A$ is a redundant relation. Now consider the mapping cone
\[
\xymatrix{
& \bF(r-1)_{r-1} \ar[dl] & \ar[l] \ar[dl] \bF(r-1)_r & \ar[l] \ar[dl] \bF(r-1)_{r+1} & \ar[l] \ar[dl] \cdots \\
\bG(r-1)_0 & \ar[l] \bG(r-1)_1 & \ar[l] \bG(r-1)_2 & \ar[l] \bG(r-1)_3 & \ar[l] 0 
}
\]
The maps $\bF(r-1)_{r-1} \to \bG(r-1)_0$ and $\bF(r-1)_r \to \bG(r-1)_1$ are isomorphisms, except when $r-1$ is odd, in which case the term $(\det E)^r \otimes A$ is in the kernel of the second map. When $r=n-1$, there is an additional cancellation involving the terms $\bigwedge^n E \otimes (\det E)^n \otimes A$ in $\bF(n-2)_n$ and $\bG(n-2)_2$. 

Finally, we can rearrange the resulting presentation matrix into upper-triangular form as follows. Note that all of the maps in the presentation matrix are saturated maps, i.e., their cokernels are free $\bZ$-modules. This can be shown by induction on $r$. Let $N_r$ be the cokernel of the presentation matrix. Consider the submodule of $N_r$ generated by $\bigoplus_{k > 0} \bigwedge^{r-2k} E \otimes (\det E)^r \otimes A$. The quotient is generated by $\bigwedge^r E \otimes (\det E)^r \otimes A$. By induction, the cokernel of $\bG(r-1)_\bullet$ has $M_{r-1}$ as a factor, so this implies that in the diagonal maps, the map $\bigwedge^{2n-r} E \otimes (\det E)^r \otimes A \to \bigwedge^r E \otimes (\det E)^r \otimes A$ is nonzero. Since all of the maps from the relation module to this term are saturated, they all factor through the relations given by $\bigwedge^{2n-r} E \otimes (\det E)^r \otimes A$ (this follows from the uniqueness of such maps up to sign by Pieri's rule \cite[Corollary 2.3.5]{weyman}). Hence $M_r \otimes (\det E)^r$ is a quotient, and continuing in this way, one can show that $N_r$ has the desired filtration. This finishes the induction and the proof.
\end{proof}

\begin{remark} Since the $M_i$ have pure resolutions, the above result
  shows that the Koszul homology of the codimension 3 Pfaffians have a
  pure filtration in the sense of \cite{numerics}.
\end{remark}


\section{Huneke--Ulrich ideals.}

We continue to work over the integers $\bZ$.

In this section, we study the Huneke--Ulrich ideals, which are defined
as follows. Let $\Phi$ be a generic skew-symmetric matrix of size $2n$
and let $\bv$ be a generic column vector of size $2n$. The Huneke--Ulrich
ideal $J$ is generated by the Pfaffian of $\Phi$ along with the
entries of $\Phi \bv$. It is well known that the ideal $J$ is Gorenstein
of codimension $2n-1$ with $2n+1$ minimal generators, i.e., it has
deviation 2. Since $\rH_2$ is the canonical module, the only interesting Koszul homology group to calculate is $\rH_1$.

The notation is as follows. Let $F$ be a free $\bZ$-module of rank $2n$. We work over the polynomial ring 
\[
A=\Sym (\bigwedge^2 F)\otimes \Sym (F^*) = \bZ[x_{i,j}, y_i]_{1\le i < j\le 2n}
\]
where the variables $x_{i,j}$ are the entries of the generic skew-symmetric matrix $\Phi$ and $y_i$ are coordinates of the generic vector $\bv$. Both $A$ and $J$ are naturally bigraded.

The minimal free resolution $\bF_\bullet$ of Huneke--Ulrich ideals was calculated by Kustin \cite{kustin}. When $n=2$, the ideal $J$ is a codimension 3 Gorenstein ideal, so is covered by the previous section via specialization. We will use $V(-d,-e)$ to denote $V \otimes A(-d,-e)$. For $n \ge 4$, the first three terms of the minimal free resolution are given by
\begin{align*}
\bF_1 &= F(-1,-1) \oplus (\det F)(-n,0)\\
\bF_2 &= \bigwedge^2 F(-2,-2) \oplus \bigwedge^{2n-1} F(-n,-1) \oplus A(-1,-2)\\
\bF_3 &= \bigwedge^3 F(-3,-3) \oplus \bigwedge^{2n-2} F(-n,-2) \oplus F(-2,-3) \oplus (\det F)(-n-1,-2)
\end{align*}
When $n=3$, the same is true except that we omit the term $\bigwedge^3 F(-3,-3)$ from $\bF_3$.

Now we consider the Koszul complex $\bK_\bullet$ on the minimal generating set of $J$. Since $J$ has deviation 2, there are only 2 nonzero Koszul homology modules. We already know that $\rH_2$ is the canonical module of $A/J$. More precisely, we have $\rH_2 = (\det F) \otimes A/J(-n-1,-2)$. 
Let us describe the cycle giving $\rH_2$ precisely. Denote the basis of the module $\bK_1=F\otimes A(-1,-1)\oplus (\det F)\otimes (-n,0)$ by $\lbrace e_1,\ldots, e_{2n}, f\rbrace$.
For $1\le i<j\le 2n$ we denote by $X(i,j)$ the $(2n-2)\times (2n-2)$ skew-symmetric matrix obtained from $X$ by removing the $i$-th and $j$-th row and column.
Then the cycle in $\bK_2$ generating $\rH_2(\bK_\bullet )$ is given by
\[
\sum_{i=1}^{2n}y_i e_i\wedge f-\sum_{1\le i<j\le 2n} (-1)^{i+j} \Pf(X(i,j))e_i\wedge e_j.
\]
Equivariantly, we just have the map
\[
(\det F)\otimes A(-n-1,-2)\rightarrow (\det F)\otimes F\otimes A(-n-1,-1)\oplus\bigwedge^2 F\otimes A(-2,2).
\]
It is easy to check that there exists only one (up to a choice of sign) equivariant $\bZ$-flat (saturated) map to each summand and that there is no such equivariant map in lower degrees. It is clear that our map defines a cycle and that the coset of this cycle in homology is annihilated by $J$, since the Koszul homology modules of a complex $\bK(u_1,\ldots , u_r)$ are always annihilated by the ideal $(u_1,\ldots ,u_r)$. So we get an equivariant map
\[
(\det F)\otimes A/J\rightarrow \rH_2(\bK_\bullet).
\]
A standard application of the acyclicity lemma shows that this map is an isomorphism.

\begin{proposition}
The first Koszul homology module has the presentation
\[
\begin{array}{c} F(-2, -3)\oplus\\
 \bigwedge^{2n-2}F(-n, -2)\oplus\\
 (\det F) \otimes F(-n-1, -1)
\end{array}
\to
\begin{array}{c}
A(-1, -2)\oplus\\
\bigwedge^{2n-1}F(-n, -1)
\end{array}
\to \rH_1 \to 0.
\]
\end{proposition}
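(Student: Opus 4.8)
The plan is to extract the presentation of $\rH_1(\bK_\bullet)$ from a truncation of the Koszul complex, exactly as in the proof of the main theorem of Section 2 but now carried out concretely. The point is that $\rH_1$ is computed from the tail $\bK_3 \to \bK_2 \to \bK_1$ together with the knowledge of $\bK_2/\im(\bK_3)$, and since $A/J$ is Cohen--Macaulay of codimension $2n-1$ with known (Kustin) minimal free resolution $\bF_\bullet$, I can replace the infinite Koszul complex by a finite complex quasi-isomorphic to it in the relevant range. First I would recall that for any sequence $u_1,\dots,u_r$ generating an ideal $J$ in a Cohen--Macaulay ring with $R/J$ Cohen--Macaulay of codimension $g$, the Koszul homology $\rH_i$ for $i \le r-g$ can be described via the comparison of $\bK_\bullet$ with the Tate/acyclic closure or, more elementarily here, via the mapping cone argument: there is a map of complexes $\bK_\bullet \to \bF_\bullet$ lifting the identity on $A/J$, and the homology $\rH_i(\bK_\bullet)$ is read off from the homology of the cone in low degrees.

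Concretely, I would proceed as follows. Since $J$ has deviation $2$, we have $r = 2n+1$ and $g = 2n-1$, so $r - g = 2$, and thus the only nonzero Koszul homologies are $\rH_1$ and $\rH_2$, with $\rH_2 = \omega_{A/J}$ already identified above as $(\det F)\otimes A/J(-n-1,-2)$. For $\rH_1$: consider the truncated complex $\tau_{\ge 1}$ of the mapping cone between $\bK_\bullet$ and $\bF_\bullet$. After cancelling the common summands $\bK_0 = A \cong \bF_0$ and the term $\bK_1 = F(-1,-1)\oplus(\det F)(-n,0) \cong \bF_1$ (the generators of $J$ match the generators appearing in $\bF_1$, by the explicit description of the Huneke--Ulrich generators), one obtains a finite complex whose first homology is $\rH_1(\bK_\bullet)$. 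Reading off the next two terms of this complex, using the formulas $\bK_2 = \bigwedge^2(\bF_1)$, $\bK_3 = \bigwedge^3(\bF_1)$ and Kustin's $\bF_2$, $\bF_3$, and performing the minimal cancellations (matching the summands of $\bK_2$ against $\bF_2$ and of $\bK_3$ against $\bF_3$), I expect to land exactly on
\[
\begin{array}{c} F(-2,-3)\oplus \bigwedge^{2n-2}F(-n,-2)\oplus (\det F)\otimes F(-n-1,-1)\end{array}
\longrightarrow
\begin{array}{c} A(-1,-2)\oplus \bigwedge^{2n-1}F(-n,-1)\end{array}
\to \rH_1 \to 0.
\]
Here the target $A(-1,-2)\oplus\bigwedge^{2n-1}F(-n,-1)$ is what survives in $\bK_2$ after removing the copy of $\bigwedge^2 F(-2,-2) \cong \bF_2$-summand that maps isomorphically, and the source is $\bK_3$ after analogous cancellation against $\bF_3$ (with the $\bigwedge^3 F(-3,-3)$ term of $\bK_3$ cancelling the corresponding term of $\bF_3$, which is why the $n=3$ case still works: both sides lose that term simultaneously).

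The bookkeeping one must do: identify which map $\bK_1 \to \bF_1$ is the identity (clear from the generator description), verify that the induced map $\bK_2 = \bigwedge^2\bF_1 \to \bF_2$ restricted to the $\bigwedge^2 F(-2,-2)$ summand is an isomorphism onto the corresponding $\bF_2$ summand (this is where one uses that $\Pf(\Phi)$ and the entries of $\Phi\bv$ assemble into the Koszul syzygies matching Kustin's presentation), and similarly in degree $3$. Throughout I would invoke, as in Section 2, that the relevant equivariant maps between these Schur-functor summands are unique up to scalar (Pieri's rule, \cite[Corollary 2.3.5]{weyman}), so that checking an isomorphism reduces to checking it is nonzero modulo a suitable prime, or checking ranks after specialization.

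The main obstacle is the cancellation in homological degree $2$ and $3$ of the mapping cone: one has to be sure that the maps $\bigwedge^i \bF_1 \to \bF_i$ induced by a chosen chain-level lift are \emph{split} on the nose on the claimed summands, not merely nonzero, so that the minimal cone really has the stated two terms and no leftover. Establishing this splitness is exactly the analogue of the ``saturated maps'' argument in the proof of the Section 2 theorem: one shows by the uniqueness-up-to-sign of equivariant $\bZ$-flat maps that the component of the cone differential $\bigwedge^2 F(-2,-2)\to \bigwedge^2 F(-2,-2)$ is an isomorphism over $\bZ$ (equivalently, it is a unit scalar), and likewise $\bigwedge^3 F(-3,-3)\to \bigwedge^3 F(-3,-3)$ and $\bigwedge^{2n-1}F$--related terms, after which everything else is forced. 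Once these splittings are in hand, the presentation is immediate, and in fact the map $\rH_2 \hookrightarrow$ nothing is consistent with $\rH_2$ being computed from the same cone one homological degree up, giving a sanity check on the cancellations.
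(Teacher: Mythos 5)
Your overall strategy is the same as the paper's: lift the identity on $A/J$ to a comparison map $\bK_\bullet \to \bF_\bullet$ with $\bF_\bullet$ Kustin's resolution, and read off a presentation of $\rH_1$ from the (reduced) mapping cone. However, the concrete bookkeeping is structurally wrong. In the cone of $\bK_\bullet \to \bF_\bullet$ the degree-$i$ term is $\bK_{i-1}\oplus\bF_i$, so after cancelling $\bK_0$ against $\bF_0$ and $\bK_1$ against $\bF_1$ one gets $\rH_1 \cong \coker\bigl(\bK_2 \oplus \bF_3 \to \bF_2\bigr)$: the \emph{generators} come from $\bF_2$ and the \emph{relations} from $\bF_3$ together with $\bK_2$, while $\bK_3$ does not enter the presentation at all (it is only used, optionally, to cancel the $\bigwedge^3 F(-3,-3)$ summand of $\bF_3$). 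Your identification is the opposite: you say the target is ``what survives in $\bK_2$'' and the source is ``$\bK_3$ after cancellation against $\bF_3$.'' This cannot give the stated presentation, since $A(-1,-2)$ and $\bigwedge^{2n-1}F(-n,-1)$ are summands of Kustin's $\bF_2$, not of $\bK_2=\bigwedge^2F(-2,-2)\oplus(\det F)\otimes F(-n-1,-1)$, and the source summands $F(-2,-3)$, $\bigwedge^{2n-2}F(-n,-2)$ lie in $\bF_3$, not in $\bK_3=\bigwedge^3F(-3,-3)\oplus(\det F)\otimes\bigwedge^2F(-2-n,-2)$, while $(\det F)\otimes F(-n-1,-1)$ comes from $\bK_2$, not $\bK_3$.

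Two further gaps. First, the summand $(\det F)(-n-1,-2)$ of $\bF_3$ has no counterpart in $\bK_\bullet$ to cancel against, so your ``match $\bK_3$ against $\bF_3$'' scheme leaves it unexplained; this is precisely the one non-formal step in the paper's proof, which disposes of these relations as redundant by appealing to Kustin's explicit differential (\cite[Definition 2.3]{kustin}). Your proposal never mentions this term, yet without removing it the displayed presentation is not obtained. Second, your treatment of $n=3$ is backwards: $\bK_3=\bigwedge^3(\bK_1)$ always contains $\bigwedge^3F(-3,-3)$, whereas Kustin's $\bF_3$ omits it exactly when $n=3$, so the two sides do not ``lose that term simultaneously''; in the correct setup this causes no harm only because $\bK_3$ never appears in the presentation. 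The concerns you do raise (that the diagonal components such as $\bigwedge^2F(-2,-2)\to\bigwedge^2F(-2,-2)$ must be isomorphisms over $\bZ$, checkable by equivariance and uniqueness of saturated maps) are legitimate and consistent with the paper, but they do not repair the misassignment of generators and relations or the missing redundancy argument.
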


\begin{proof}
First note that 
\[
\bK_i = \bigwedge^i(F(-1,-1) \oplus (\det F)(-n,0)) = \bigwedge^i F(-i,-i) \oplus (\det F) \otimes \bigwedge^{i-1} F(1-i-n,1-i).
\]
Since the cokernel of both $\bK_\bullet$ and $\bF_\bullet$ agree and $\bF_\bullet$ is acyclic, we get a lifting $\bK_\bullet \to \bF_\bullet$:
\[
\footnotesize \xymatrix@-1.2pc{
& \ar[dl] A & \ar[l] \ar[dl] {\begin{array}{c} F(-1,-1) \\ (\det F)(-n,0) \end{array}} & \ar[l] \ar[dl] {\begin{array}{c} \bigwedge^2 F(-2,-2) \\ (\det F) \otimes F(-1-n,-1) \end{array}} & \ar[l] \ar[dl] {\begin{array}{c} \bigwedge^3 F(-3,-3) \\ (\det F) \otimes \bigwedge^2 F(-2-n,-2) \end{array}} \\
{\begin{array}{c} A \\ \ \end{array}} & 
\ar[l] {\begin{array}{c} F(-1,-1) \\ (\det F)(-n,0) \end{array}} & 
\ar[l] {\begin{array}{c} \bigwedge^2 F(-2,-2) \\ \bigwedge^{2n-1} F(-n,-1) \\ A(-1,-2) \end{array}} & 
\ar[l] {\begin{array}{c} \bigwedge^3 F(-3,-3) \\ \bigwedge^{2n-2} F(-n,-2) \\ F(-2,-3) \\ (\det F)(-n-1,-2) \end{array}} & \ar[l] \cdots
}
\]
Hence a presentation matrix for $\rH_1$ is given by
\[
\begin{array}{c} F(-2, -3)\oplus\\
 \bigwedge^{2n-2}F(-n, -2)\oplus\\
 (\det F) \otimes F(-n-1, -1)\\
(\det F)(-n-1,-2)
\end{array}
\to
\begin{array}{c}
A(-1, -2)\oplus\\
\bigwedge^{2n-1}F(-n, -1)
\end{array}
\to \rH_1 \to 0.
\]
From \cite[Definition 2.3]{kustin}, we conclude that the relations given by $(\det F)(-n-1,-2)$ are redundant, which finishes the proof.
\end{proof}

Inside the affine space $X = \Spec A =\bigwedge^2 F^*\oplus F$ the subvariety defined by $J$ is
\[
Y = \lbrace (\phi, v)\in X \mid \rank \phi\le 2n-2,\ \phi (v)=0\rbrace .
\]

Let us consider the Grassmannian $\Gr(2, F)$ with the tautological sequence
\[
0\to\cR \to F\times \Gr (2, F)\to \cQ\to 0
\]
where $\cR = \{(f,W) \mid f \in W\}$. Consider the incidence variety
\[
Z =  \lbrace (\phi, v, W)\in X\times \Gr(2, F)\mid v\in W \subset \ker(\phi)\rbrace .
\]
Then $\cO_Z = \Sym(\eta)$ where $\eta =\bigwedge^2 \cQ \oplus \cR^*$. The first projection $q\colon Z \to X$ satisfies $q(Z)=Y$.

\begin{theorem} The nonzero homology of $\bK_\bullet$ is
\begin{align*}
\rH_0(\bK_\bullet ) &= \rH^0(\Gr(2,F); \Sym(\eta)) = A/J,\\ 
\rH_1 (\bK_\bullet ) &= \rH^0 (\Gr(2, F); \cR\otimes \Sym (\eta))(-1,-1),\\
\rH_2 (\bK_\bullet ) &= \rH^0 (\Gr(2, F); \bigwedge^2 \cR \otimes \Sym (\eta))(-2,-2)=\det F \otimes A/J(-n-1,-2).
\end{align*}
\end{theorem}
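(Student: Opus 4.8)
The plan is to apply the geometric method of \cite{weyman} to the map $q\colon Z\to X$ and the companion projection $p\colon Z\to\Gr(2,F)$, which presents $Z$ as the total space over $\Gr(2,F)$ of the bundle $\bigwedge^2\cQ^*\oplus\cR$ (so that $p_*\cO_Z=\Sym(\eta)$, $\eta=\bigwedge^2\cQ\oplus\cR^*$). By the projection formula $\rH^0\bigl(\Gr(2,F);\bigwedge^i\cR\otimes\Sym(\eta)\bigr)=\rH^0\bigl(Z;p^*\bigwedge^i\cR\bigr)$, so the content of the theorem is that $\rH^0(Z;p^*\bigwedge^i\cR)\cong\rH_i(\bK_\bullet)$ after the twist by $A(-i,-i)$, for $i=0,1,2$, and that $\rH_i(\bK_\bullet)=0$ for $i\ge3$. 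The last statement is immediate since $J$ has deviation $2$. The case $i=0$ is also quick: $\rH_0(\bK_\bullet)=A/J$ by definition of Koszul homology, while $\rH^0(Z;\cO_Z)=q_*\cO_Z=\cO_Y=A/J$ because $Y$ is normal (it is Cohen--Macaulay and regular in codimension $1$) and $q$ is an isomorphism away from a closed subset of codimension $\ge2$ in $Y$.

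For the other two cases I would first record that $Y$ has rational singularities --- this is classical (it is the codimension $3$ Pfaffian variety when $n=2$, and in general it follows from the resolution $q$ together with Kustin's resolution $\bF_\bullet$ of \cite{kustin}). Hence $\rR^{>0}q_*\cO_Z=0$; and since a direct computation gives $\omega_Z\cong p^*(\det\cR)^{\otimes2}$ up to a bidegree shift, each of $p^*\cR$ and $p^*\bigwedge^2\cR$ is, up to a bidegree shift, of the form $\omega_Z\otimes p^*\mathcal E$ with $\mathcal E$ globally generated on $\Gr(2,F)$ (namely $\mathbb{S}_{(2,1)}\cR^*$, resp. $\bigwedge^2\cR^*$), so a Grauert--Riemenschneider-type argument gives $\rR^{>0}q_*(p^*\bigwedge^i\cR)=0$ for $i=1,2$ as well. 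Consequently each $\rH^0(Z;p^*\bigwedge^i\cR)$ is resolved by the explicit finite free complex produced by the geometric method, whose terms are built from the cohomology groups $\rH^\ell\bigl(\Gr(2,F);\bigwedge^i\cR\otimes\bigwedge^k\xi\bigr)$, where $\xi$ is the dual of the quotient of the trivial bundle $\underline X$ by the subbundle $\bigwedge^2\cQ^*\oplus\cR$ whose total space is $Z$; in particular all three modules are maximal Cohen--Macaulay over $A/J$.

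It remains to match these resolutions against the Koszul homology. For $i=2$: by \eqref{eqn:koszuldual} and the Gorenstein property, $\rH_2(\bK_\bullet)$ is the canonical module $\omega_{A/J}=\det F\otimes A/J(-n-1,-2)$, as recorded above; on the geometric side $\bigwedge^2\cR=\det\cR$ is a square root of $\omega_Z$, so $\cHom_Z(p^*\det\cR,\omega_Z)\cong p^*\det\cR$ and Grothendieck duality for the birational morphism $q$ shows that $\rR q_*(p^*\bigwedge^2\cR)$ is self-dual; hence $\rH^0(Z;p^*\bigwedge^2\cR)$ is a self-dual maximal Cohen--Macaulay $A/J$-module, and a bidegree (Hilbert-series) count identifies it, after the $(-2,-2)$ twist, with $\omega_{A/J}$. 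For $i=1$ --- the heart of the matter --- I would compute, via Bott's theorem on $\Gr(2,2n)$, the first two terms of the geometric-method resolution of $\rH^0(Z;p^*\cR)$ and check that, after the $(-1,-1)$ twist, they reproduce the presentation of $\rH_1$ from the preceding Proposition, whence $\rH_1\cong\rH^0(Z;p^*\cR)(-1,-1)$. Alternatively, mirroring the explicit cycle written down for $\rH_2$ above, one can exhibit the equivariant cycles in $\bK_1$ corresponding to the generators of $\rH^0(Z;p^*\cR)(-1,-1)$, obtain an equivariant map $\rH^0(\Gr(2,F);\cR\otimes\Sym(\eta))(-1,-1)\to\rH_1(\bK_\bullet)$, and conclude it is an isomorphism by the acyclicity lemma using the presentation from the Proposition.

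The main obstacle is the Bott bookkeeping on $\Gr(2,2n)$: unlike the most favorable ``geometric-method'' situations, the groups $\rH^\ell(\Gr(2,F);\bigwedge^i\cR\otimes\bigwedge^k\xi)$ do not all lie in degree $\ell=0$ (already $\rH^1(\Gr(2,F);\cR\otimes\cQ^*)\ne0$), so the resolutions receive contributions from several cohomological degrees, and one must track the Schur functors of $\cR$ and $\cQ$ --- together with both gradings --- carefully enough to pin down the terms and the twists, in particular to see that for $i=1$ the first two terms agree with the Proposition's presentation exactly. A secondary point needing care is the vanishing $\rR^{>0}q_*(p^*\bigwedge^i\cR)=0$: establishing rationality of the singularities of $Y$, and reducing the vector-bundle coefficients $p^*\bigwedge^i\cR$ to the line bundle $\omega_Z$.
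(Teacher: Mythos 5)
Your core strategy for the hard case $i=1$ --- use the geometric method of \cite[Chapter 5]{weyman} for the desingularization $Z\to Y$, extract the beginning of the resolution of $\rH^0(\Gr(2,F);\cR\otimes\Sym(\eta))$, and match it against the presentation of $\rH_1(\bK_\bullet)$ from the preceding Proposition using uniqueness of equivariant maps --- is exactly the paper's argument, and the self-duality observation for $\rH_2$ is consistent with it. But there is a genuine gap: everything in your proposal lives in characteristic zero, while the theorem (and the whole section) is stated over $\bZ$. Bott's theorem, rational singularities, Grauert--Riemenschneider vanishing, and Grothendieck duality as you invoke them are characteristic-zero tools; over $\bZ$ the cohomology of homogeneous bundles on $\Gr(2,F)$ can acquire torsion, so at best your argument identifies $\rH_i(\bK_\bullet)\otimes\bQ$ with the corresponding section modules over $\bQ$. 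The paper closes precisely this gap: from \cite{kustin} the coordinate ring $A/J$ and hence its canonical module are $\bZ$-torsion-free, so the descriptions of $\rH_0$ and $\rH_2$ are characteristic-independent, and a Hilbert function argument shows $\rH_1$ is also $\bZ$-torsion-free, which is what allows the characteristic-zero identification to extend to $\bZ$. Without some such flatness/torsion-freeness step your proof does not establish the stated theorem.

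Two secondary points. First, your vanishing $\rR^{>0}q_*(p^*\bigwedge^i\cR)=0$ via a ``twisted GR'' argument is not formal: the twist $\cR\otimes\omega_Z^{-1}$ is a rank-2 globally generated bundle (not a line bundle), and GR-type vanishing does not pass to arbitrary globally generated vector-bundle coefficients without further argument; if you want this vanishing you should get it directly from the Bott computation. Second, this heavier machinery (rational singularities of $Y$, duality for $\rR q_*$) is not actually needed for the statement being proved: over $\bQ$ it suffices, as the paper does, to check that the presentation matrix of $\rH^0(\Gr(2,F);\cR\otimes\Sym(\eta))$ involves the same representations as the presentation of $\rH_1$ in the Proposition and that the equivariant maps between those representations are unique up to sign, so the two cokernels coincide; the delicate Bott bookkeeping you worry about is only needed for the first two terms.
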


\begin{proof} First we work over $\bQ$. Using the results in \cite[Chapter 5]{weyman}, one can check that the presentation matrix for $\rH^0(\Gr(2,F); \cR \otimes \Sym(\eta))$ contains the same representations as the presentation matrix for $\rH_1(\bK_\bullet)$. By equivariance, such maps are unique up to sign, so we conclude that they agree. From \cite{kustin}, we know that the coordinate ring of $Y$, and hence its canonical module, are torsion-free over $\bZ$. In particular, the descriptions of $\rH_0$ and $\rH_2$ are independent of characteristic. By a Hilbert function argument, one sees that $\rH_1$ is also a torsion-free $\bZ$-module, so our description extends to $\bZ$-coefficients.
\end{proof}


\small \noindent Steven V Sam, 
Massachusetts Institute of Technology, Cambridge, MA, USA \\
{\tt ssam@math.mit.edu}, \url{http://math.mit.edu/~ssam/}

~

\small \noindent Jerzy Weyman, 
Northeastern University, Boston, MA, USA \\
{\tt j.weyman@neu.edu}, \url{http://www.math.neu.edu/~weyman/}


\begin{thebibliography}{99}

\setlength{\itemsep}{-1mm}
\small

\bibitem[AH]{ah} Luchezar Avramov, J\"urgen Herzog, The Koszul
  algebra of a codimension 2 embedding, {\it Math. Z.} {\bf 175}
  (1980), no.~3, 249--260.

\bibitem[BE]{be} David A. Buchsbaum, David Eisenbud, Algebra structures for finite free resolutions, and some structure theorems for ideals of codimension 3, {\it Amer. J. Math.} {\bf 99} (1977), no.~3, 447--485.

\bibitem[Eis]{eisenbud} David Eisenbud, {\it Commutative algebra. With a view toward algebraic geometry}, Graduate Texts in Mathematics {\bf 150}, Springer-Verlag, New York, 1995.

\bibitem[EES]{numerics} David Eisenbud, Daniel Erman, Frank-Olaf
  Schreyer, Filtering free resolutions, preprint, \arxiv{1001.0585v2}.

\bibitem[M2]{M2} Daniel~R. Grayson, Michael~E. Stillman, Macaulay2, a software system for research in algebraic geometry, available at \url{http://www.math.uiuc.edu/Macaulay2/}.

\bibitem[Hun1]{huneke} Craig Huneke, Linkage and the Koszul homology of ideals, {\it Amer. J. Math.} {\bf 104} (1982), no.~5, 1043--1062.

\bibitem[Hun2]{hunekestrong} \bysame, Strongly Cohen--Macaulay schemes and residual intersections, {\it Trans. Amer. Math. Soc.}  {\bf 277} (1983), no.~2, 739--763.

\bibitem[Kus]{kustin} Andrew~R. Kustin, The minimal free resolutions of the Huneke--Ulrich deviation two Gorenstein ideals, {\it J. Algebra} {\bf 100} (1986), no.~1, 265--304.
    
\bibitem[Wey]{weyman} Jerzy Weyman, {\it Cohomology of Vector Bundles and Syzygies}. Cambridge Tracts in Mathematics, vol. 149, Cambridge University Press, 2003.

\end{thebibliography}
\end{document}